\newcommand\on{\operatorname}
\DeclareMathOperator{\ad}{ad}
\newcommand\oo{{\infty}}
\renewcommand\o{\circ}
\newcommand\cal{\mathcal}
\newcommand\R{\mathbb{R}}
\renewcommand\d{\mathrm{d}}
\newcommand \al{\alpha}
\newcommand\be{\beta}
\newcommand\ga{\gamma}
\renewcommand\th{\theta}
\newcommand\io{\iota}
\newcommand\ka{\kappa}
\newcommand\ta{\tau}
\newcommand\om{\omega}
\newcommand\Om{\Omega}
\newcommand\M{\mathcal{M}}
\newcommand\pa{\partial}
\newcommand\RR{\mathbb{R}}
\newcommand\CC{\mathbb{C}}
\newcommand\db{\on{DB}}
\newcommand\g{\mathfrak {g}}
\newcommand\X{\mathfrak X}
\renewcommand\Im{\on{Im}}
\newcommand\Ker{\on{Ker}}
\newcommand\ind{\textnormal{ind}}
\newcommand\srp{\sharp_{_\Pi}}
\newcommand\srm{\sharp_{_\M}}
\newcommand\srind{{g}_{\ind}^{S}}
\numberwithin{equation}{section}
\newtheorem{thm}{\bf Theorem}[section]
\newtheorem{lem}[thm]{\bf Lemma}
\newtheorem{prop}[thm]{\bf Proposition}
\newtheorem{defn}{\bf Definition}[section]
\theoremstyle{remark}
\newtheorem{rem}{\bf Remark}[section]
\newtheorem{exmp}{\bf Example}[section]
\definecolor{bgd}{RGB}{153,0,51}      
\keywords{Poisson manifolds, Riemannian manifolds, double bracket vector fields, Lie Poisson structures. }
\subjclass[2020]{53D17, 58D17,	17B20}
\begin{document}
	
	\title{Generalized double bracket vector fields}
	 
 \author[P.\ Birtea]{Petre Birtea}
\address{P.\ Birtea:
Department of Mathematics, West University of Timi\c soara \\
}
\email{petre.birtea@e-uvt.ro}

\author[Z. Ravanpak]{Zohreh Ravanpak}
\address{Z.\ Ravanpak: 
Department of Mathematics, West University of Timi\c soara \\
} 
\email{zohreh.ravanpak@e-uvt.ro}

\author[C. Vizman]{Cornelia Vizman}
\address{C.\ Vizman:
Department of Mathematics, West University of  Timi\c soara \\} \email{cornelia.vizman@e-uvt.ro}

	\begin{abstract}
		\noindent
        We generalize double bracket vector fields, originally defined on semisimple Lie algebras, to Poisson manifolds equipped with a pseudo-Riemannian metric by utilizing a symmetric contravariant 2-tensor field. We extend the normal metric on an adjoint orbit of a compact semisimple Lie algebra to ensure that these vector fields become gradient vector fields on each symplectic leaf. Furthermore, we apply this construction to enhance the equilibria of Hamiltonian systems, specifically addressing the challenge of asymptotically stabilizing points that are already stable, through dissipation terms derived from generalized double bracket vector fields.
        	\end{abstract}
	\maketitle


	\section{Introduction}
	\medskip\noindent 

    On a Riemannian manifold, a fundamental question is how to compute the gradient vector field of a cost function. Especially when the metric is not simple or when working in non-standard coordinate system. This problem arises in various contexts, particularly in the design of steepest descent algorithms on Riemannian manifolds, such as matrix Lie groups and homogeneous spaces {(e.g.~\cite{helmke})}. To address this question, one requires adequate information about the metric to solve the equation defining the gradient vector field. This process can often be infeasible or lead to highly complex calculations. 
\smallskip\noindent

A more  amenable case arises when the manifold of interest is embedded as a submanifold within an ambient Riemannian manifold possessing simpler geometric properties. In such cases, one can explicitly construct an orthogonal projection operator that relates the gradient vector fields between the two manifolds. The orthogonal projection operator allows for the computation of the gradient on the submanifold by projecting the gradient from the ambient space onto the tangent space of the submanifold. This approach has been widely applied in constrained optimization problems, e.g.~ in \cite{absil, edelman}.

	\smallskip In the often encountered case of a submanifold defined by a set of constraint functions, the gradient vector field on the submanifold, with respect to the induced metric, can be realized as the restriction of a vector field (termed the embedded gradient vector field) defined on the ambient space. This construction has been thoroughly developed in \cite{birtea-comanescu, Birtea-Comanescu-Hessian}.

	  \smallskip
In certain instances, there may be insufficient components of the constraint map that defines the submanifold. This situation arises, for example, when there is inadequate knowledge of the Casimir functions necessary to fully describe a symplectic leaf of a Poisson manifold. Nevertheless, in the context of the Lie-Poisson structure associated with a compact Lie algebra, the gradient vector field on the regular symplectic leaves can be derived as the restriction of a vector field defined in the ambient space, referred to as the double bracket (DB) vector field \cite{bloch-brockett}. In contrast to the previous setting, the metric on the symplectic leaf is not the induced metric; rather, it is known as the normal metric.
  \smallskip
  
Our goal in this paper is to establish an analogue of the aforementioned scenario within the framework of a general Poisson manifold equipped with a (pseudo-)Riemannian metric. More precisely, following an idea presented in \cite{birtea-comanescu}, we construct a symmetric contravariant 2-tensor field that couples the Poisson structure with the Riemannian structure, without imposing compatibility conditions. In the literature, Poisson manifolds endowed with symmetric contravariant 2-tensor fields are referred to as metriplectic structures \cite{mo1,mo2,izu}. These structures provide a geometric framework for dynamics that exhibit both conservative and dissipative characteristics. Similarly, the newly constructed tensor field, which we denote as the metriplectic tensor field, facilitates the association of a vector field to every smooth function in a natural manner. We call this vector field the generalized double bracket (GDB) vector field, as it recovers the classical DB vector field in the case of a compact semisimple Lie algebra. The GDB vector field possesses several advantageous properties: it is tangent to all symplectic leaves and, when restricted to such a leaf, it demonstrates gradient-type behavior with respect to a natural metric that generalizes the normal metric.

 \smallskip  All of the constructions described above can be applied more generally to a pseudo-Riemannian structure on the Poisson manifold. Consequently, non-compact semisimple Lie algebras can also be included in this framework. However, this generalization requires a restriction to what we will refer to as ``good symplectic leaves", specifically those for which the induced metric is non-degenerate.
 
 \smallskip
In the nonlinear Poisson setting, new complications arise, particularly concerning the critical question of whether the restriction of the indefinite signature metric to the symplectic leaves is non-degenerate. While this property is observed in the specific case of a non-compact semisimple Lie algebra on certain exceptional leaves, the situation becomes significantly more intricate in the general case. This question will be addressed in future research, which will explore the sophisticated interplay between the Poisson and pseudo-Riemannian structures.

\smallskip
Our constructions, which generalize the DB vector fields, are well-suited for the investigation of various dynamical properties of dissipative systems, analogous to methodologies employed in \cite{BC,Moris,Bloch}. To demonstrate the practical significance of these newly introduced mathematical objects, we apply them to the problem of asymptotic stabilization. Specifically, we illustrate how these GDB vector fields can be utilized to enhance the stability of already stable equilibria in Hamiltonian systems. This is achieved by incorporating a dissipation term of the GDB vector field type, thereby transforming stable equilibria into asymptotically stable states.


	\subsection*{Structure of the paper} 
In Section \ref{sec:2}, we recall the classical DB vector field in the linear case of a semisimple Lie algebra 
$\g$. To generalize this setting, we construct a symmetric tensor field that couples a Poisson structure with a pseudo-Riemannian structure, which we refer to as metriplectic tensor field. Subsequently, we introduce the GDB vector field and demonstrate that, in the specific case of a semisimple Lie algebra, it reduces to the classical DB vector field. In Section \ref{Sec3}, we construct a pseudo-Riemannian metric on symplectic leaves, which we term the DB metric, thus generalizing the normal metric defined on an adjoint orbit of a compact semisimple Lie algebra. It is specifically restricted to what we define as good symplectic leaves within the body of the paper. Furthermore, we demonstrate that the restriction of the GDB vector field to a good symplectic leaf corresponds to the gradient of a smooth function with respect to the DB metric. In Section \ref{sec:4}, we utilize the example of two harmonic oscillators in $(n:m)$ resonance to illustrate how the addition of a GDB vector field to Hamiltonian dynamics transforms stable equilibria into asymptotically stable equilibria, while preserving the structure of the symplectic leaves.


	\section{Metriplectic tensor field: Generalized double bracket vector field}\label{sec:2}

     In this section, we introduce the metriplectic tensor field and the GDB vector field. These objects are developed within the general framework of a Poisson manifold $(M, \Pi)$ endowed with a pseudo-Riemannian metric $g$, without imposing any compatibility conditions. As a particular case, we recover the classical DB vector field.
	 
	 \medskip \noindent We begin by recalling the classical setting of the double bracket (DB) vector field. Let $ (\mathfrak{g}, [\ ,\ ]) $ be a semisimple Lie algebra with $\kappa \colon \mathfrak{g} \times \mathfrak{g} \to \mathbb{R}$ representing the Killing form, which is a non-degenerate, symmetric, Ad-invariant bilinear form. The following vector field, referred to as the DB vector field, was introduced by Brockett \cite{brockett-1,brockett-2}, and discussed further in \cite{bloch}, in the context of dynamical numerical algorithms and linear programming:
	\begin{equation}\label{1}
		\dot L=[L,[L,N]]\,,
		\end{equation}
	where $L\in\g$ and $N$ is a fixed regular element in ${\g}$. The DB vector field is indeed tangent to the adjoint orbits of the Lie algebra $\mathfrak{g}$, which are the symplectic leaves for the linear Poisson bracket on $\mathfrak{g}$. More precisely, by identifying the Lie algebra $\mathfrak{g}$ with its dual $\mathfrak{g}^*$ using the Killing form, the Lie-Poisson bracket on $\mathfrak{g}^*$ transforms into the linear Poisson bracket on $\mathfrak{g}$:
	\begin{equation}\label{2}
		\{F,G\}_{{\g}}(L)={\ka}(L,[\nabla F(L),\nabla G(L)])\,. 
	\end{equation}
	
	\medskip 
    In the case of a compact semisimple Lie algebra $\mathfrak{g}$, it has been proved that the DB vector field \eqref{1}, when restricted to a regular adjoint orbit $S \subset \mathfrak{g}$, is a gradient vector field with respect to the normal metric. We recall briefly this construction; for details, see \cite{bloch-brockett,bloch-flaschka}. For every $L \in S$, consider the orthogonal decomposition with respect to the negative Killing form $\kappa$, that is, $\mathfrak{g} = \mathfrak{g}_L \oplus \mathfrak{g}^L$, where $\mathfrak{g}_L = \text{Im}(\ad_L)$ and $\mathfrak{g}^L = \text{Ker}(\ad_L)$. The linear space $\mathfrak{g}_L$ can be identified with the tangent space $T_L S$ and $\mathfrak{g}^L$ with the normal space. One can endow the adjoint orbit $S$ with the normal metric \cite{besse}, also called the standard metric \cite{atiyah},
	\begin{equation}\label{nor}
		{\nu}^{S}([L,X],[L,Y])=-{\ka}(X^L,Y^L)\,,
	\end{equation}
	where $X^L,Y^L$ are the normal components according to the above orthogonal decomposition of $X$ and $Y$, respectively.
	\begin{thm} [\cite{bloch-brockett}, \cite{bloch-flaschka}] \label{bloch-ratiu}
		Let $N$ be a fixed regular element \footnote{The dimension of its centralizer equals the rank of the Lie algebra.} in the compact semisimple Lie algebra $\g$. 
		The gradient of the linear function $H\colon \g\to \mathbb{R}$, defined by $H(L)={\ka}(L,N)$, when restricted to a regular adjoint orbit $S$,  and computed with respect to the normal metric, is given by:	
		$$\nabla_{\nu^S}(H|_S)(L)=[L,[L,N]]\,.$$
	\end{thm}
    
\medskip
 In the following, we consider a more general scenario where $(M,\Pi,{g})$ is a Poisson manifold equipped with a pseudo-Riemannian metric. 
	
	\begin{defn} \label{Mdef}
We define the \textbf{metriplectic tensor field} as the following symmetric contravariant 2-tensor field:
		$$\M(\alpha,\beta):={g}(\sharp_{_\Pi}\al,\srp\beta),\quad\alpha,\beta\in\Om^1(M)\,.$$ 
\end{defn}
\noindent When considering closed forms 
 $\alpha =\d F$ and $\beta = \d G$, where $F,G\in {\cal C}^{\infty}(M)$, the metriplectic tensor field can be expressed as follows:
$${{\M}(\d F,\d G)={g}(X_F,X_G)\,.}$$
\begin{lem}\label{bigdiagram}
	The identity 
	$\srm=-\srp\o\flat_g\o\srp$ holds, 	where $\flat_g$  is the flat operator associated with the pseudo-Riemannian metric $g$ on $M$.
\end{lem}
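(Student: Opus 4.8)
The plan is to verify the operator identity $\srm = -\srp\o\flat_g\o\srp$ by pairing both sides with an arbitrary $1$-form $\be$ and showing that the resulting functions coincide for all $\al,\be\in\Omega^1(M)$; since a vector field is determined by its pairings with all $1$-forms, this suffices. The ingredients are exactly four: the defining relation of the sharp map attached to the symmetric contravariant $2$-tensor $\M$, the defining relation $\langle\be,\srp\al\rangle=\Pi(\al,\be)$ of the Poisson sharp map, the skew-symmetry of $\Pi$, and the definition of the flat map $\flat_g(Y)=g(Y,\cdot)$.

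On the left, the sharp map associated with the tensor $\M$ of Definition~\ref{Mdef} satisfies, by construction, $\langle\be,\srm\al\rangle=\M(\al,\be)=g(\srp\al,\srp\be)$. On the right, I would abbreviate $\ga:=\flat_g(\srp\al)\in\Omega^1(M)$, so that $\ga(Y)=g(\srp\al,Y)$ for every $Y\in\X(M)$, and in particular $\ga(\srp\be)=g(\srp\al,\srp\be)$. Pairing $-\srp\ga$ with $\be$ and invoking the Poisson sharp relation gives $\langle\be,-\srp\ga\rangle=-\Pi(\ga,\be)$; the skew-symmetry $\Pi(\ga,\be)=-\Pi(\be,\ga)=-\langle\ga,\srp\be\rangle$ then converts this into $\langle\be,-\srp\ga\rangle=\ga(\srp\be)=g(\srp\al,\srp\be)$.

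Comparing the two computations yields $\langle\be,\srm\al\rangle=\langle\be,-\srp\flat_g\srp\al\rangle$ for all $\be$, whence $\srm\al=-\srp\flat_g\srp\al$ and the identity follows. The computation is elementary and the only point demanding attention is the minus sign: it is produced precisely by the skew-symmetry of the Poisson bivector, so the sole (minor) obstacle is to fix and carry the sign convention for $\srp$ consistently. As a reassuring check, both $\M$ and the right-hand composition are insensitive to flipping the sign of $\srp$, since $\srp$ occurs an even number of times in each, which is consistent with the identity holding regardless of that convention.
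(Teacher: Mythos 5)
Your proof is correct and follows essentially the same route as the paper: both pair the identity against an arbitrary $1$-form $\be$, unwind the definitions of $\srm$, $\srp$, and $\flat_g$, and produce the minus sign from the skew-symmetry of $\Pi$. The only difference is organizational (you evaluate the two sides separately and compare, while the paper writes a single chain of equalities), and your closing remark on the sign-convention insensitivity of both sides is a valid consistency check.
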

\begin{proof}
The proof involves the following straightforward computation:
	\begin{align*}
		(\srm\al)(\be)&=\M(\al,\be)=g(\srp\al,\srp\be)=\srp\be(\flat_g(\srp\al))
		=\Pi(\be,\flat_g\o\srp(\al))\\
		&=-\Pi(\flat_g\o\srp(\al),\be)=-(\srp\o\flat_g\o\srp)(\al)(\be)\,,
	\end{align*}
	for all $\al,\be\in \Om^1(M)$\,. 
\end{proof}
\noindent In the finite-dimensional case, the symmetric matrix associated with the metriplectic tensor field is given by:
\begin{equation}\label{prod}
	[{\M}]=[{ \Pi}]^T[{ g}][{\Pi}]=-[\Pi][g][\Pi].
\end{equation}

Poisson manifolds endowed with symmetric contravariant 2-tensor fields are discussed in the literature, notably in the works of I.~Vaisman \cite{izu} and J.~P.~Morrison \cite{mo1,mo2}. The latter refers to these structures as metriplectic structures, which provide a geometric framework for dynamics that incorporates both conservative and dissipative features. 
\medskip

Next, we introduce the generalization of the DB vector field on a nonlinear Poisson manifold. In this setting, the DB vector field can be extended to accommodate the complexities inherent in nonlinear structures.  
\begin{defn} \label{dbv}
Let $(M,g, \Pi)$ be a pseudo-Riemannian manifold equipped with a Poisson structure. For a smooth function $G$ on $M$,  the associated vector field	\begin{equation}\label{defi}\partial_{\M}G:=-{i}_{\d G}{\M}\,,\end{equation}
is called the  \textbf{generalized double bracket (GDB) vector field}.
\end{defn}
\begin{rem}\label{r|}
	By Lemma \ref{bigdiagram}, we can observe that the GDB vector field $\partial_{\M}G$ is closely related to the Hamiltonian vector field $X_G$.
	For $G\in {\cal C}^{\infty}(M)$, we have
	\begin{equation}\label{calc}	\partial_{\M}G={(\sharp_\Pi \circ \flat_g) (X_G)} = 
		i_{\flat_g(X_G)}\Pi\,,
	\end{equation}
which implies that $\partial_{\M}G$ is tangent to all the symplectic leaves of $\Pi$.  Additionally, note that $\partial_{\M}G = -\sharp_\M (\d G)$ by \eqref{defi}.
\end{rem}

\medskip\noindent 
The GDB vector field $\partial_{\M}G$ serves as a natural generalization of the DB vector field defined on a semisimple Lie algebra $\g$. In this context, the Killing form provides an isomorphism $\flat_\ka\colon \g\to\g^*$, which facilitates the transportation of the Poisson structure from $\g^*$ to the linear Poisson structure on $\g$ as expressed in equation in \eqref{2}. 

\begin{lem}\label{xg}
On a semisimple Lie algebra $\g$, with the linear Poisson structure \eqref{2},  the Hamiltonian vector field associated with the function $G\in C^{\infty}(\g)$ is defined as
	\begin{equation}\label{xbarg}
		X_{ G}(L)=[L,\nabla G(L)],\text{ for all }L\in\g\,,
	\end{equation}
	where the gradient $\nabla G(L)$ is taken with respect to the Killing metric $\ka$ on $\g$.
\end{lem}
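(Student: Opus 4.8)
The plan is to test the defining relation of the Hamiltonian vector field against an arbitrary function and to reduce the claimed formula \eqref{xbarg} to the ad-invariance of the Killing form. I first fix the convention that, for the linear Poisson structure \eqref{2}, the Hamiltonian vector field $X_G=\srp(\d G)$ is characterized by $\d F(X_G)=\{G,F\}_\g$ for all $F\in C^{\infty}(\g)$, which is the convention for $\srp$ used in Lemma \ref{bigdiagram}. Evaluating at $L$ and inserting \eqref{2} turns the goal into the pointwise identity
$$\d F(L)\big(X_G(L)\big)=\ka\big(L,[\nabla G(L),\nabla F(L)]\big)\quad\text{for every } F.$$

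Next I would rewrite both sides in terms of $\ka$ alone. Since $\g$ is a vector space, $T_L\g$ is canonically identified with $\g$, and the definition of the gradient with respect to the Killing metric gives $\d F(L)(v)=\ka(\nabla F(L),v)$ for all $v\in\g$. Thus the left-hand side equals $\ka(\nabla F(L),X_G(L))$, and the statement to be proved becomes $\ka(\nabla F,X_G)=\ka(L,[\nabla G,\nabla F])$ (all gradients evaluated at $L$), for every $F$.

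The one genuine computation is to transfer $\nabla F$ into the first slot on the right. Writing $[\nabla G,\nabla F]=\ad_{\nabla G}\nabla F$ and using the invariance identity $\ka(\ad_X Y,Z)=-\ka(Y,\ad_X Z)$, I obtain $\ka(L,[\nabla G,\nabla F])=-\ka([\nabla G,L],\nabla F)=\ka(\nabla F,[L,\nabla G])$. Hence $\ka(\nabla F(L),X_G(L))=\ka(\nabla F(L),[L,\nabla G(L)])$ for all $F$; since $\nabla F(L)$ exhausts $\g$ as $F$ varies and $\ka$ is non-degenerate, this forces $X_G(L)=[L,\nabla G(L)]$, as claimed.

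I do not expect a serious obstacle here: the result is structurally forced by the fact that \eqref{2} is itself manufactured from the invariance of $\ka$, so the same invariance is exactly what peels off the test function. The only point demanding care is the bookkeeping of sign and slot conventions — for the Poisson bracket, for $\srp$, and for the ad-invariance identity — since a single misplaced sign would yield $[\nabla G,L]$ instead of $[L,\nabla G]$.
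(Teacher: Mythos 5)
Your proof is correct, and it takes a more direct route than the paper's. The paper argues on the dual: it sets $\bar G = G\circ\sharp_\ka$ on $\g^*$, invokes the fact that the Hamiltonian vector fields of $G$ and $\bar G$ are $\ka$-related (because $\flat_\ka$ is a Poisson isomorphism carrying \eqref{2} to the Lie--Poisson structure), computes $X_{\bar G}$ there from the defining relation $\Pi_\xi(\d\bar G(\xi),L')=(X_{\bar G}(\xi),L')$ together with the identity $\ka(L,[\nabla G(L),L'])=\ka([L,\nabla G(L)],L')$, and then transports the result back to $\g$. You instead stay on $\g$ throughout: you test $X_G$ against $\d F$ for arbitrary $F$, use $\d F(L)=\ka(\nabla F(L),\cdot)$ to turn everything into Killing-form pairings, apply the same ad-invariance identity, and conclude by non-degeneracy of $\ka$ plus the fact that $\nabla F(L)$ sweeps out all of $\g$. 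This avoids both the detour through $\g^*$ and the $\ka$-relatedness step (which in the paper is asserted rather than proved), at the cost of taking \eqref{2} as the primitive definition rather than deriving it from the KKS bracket; since the paper states \eqref{2} as the Poisson structure on $\g$, that is legitimate. Your attention to the orientation convention is also well placed and correctly resolved: the paper's own defining relation for $X_{\bar G}$ pairs the differential of the Hamiltonian into the \emph{first} slot of $\Pi$, which is exactly your $\d F(X_G)=\{G,F\}_\g$, and it is this choice that yields $[L,\nabla G(L)]$ rather than $[\nabla G(L),L]$.
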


\begin{proof}
	The Hamiltonian vector fields associated  with Hamiltonian functions $G$ and ${\bar G}=G\circ \sharp_\ka$ on the dual space $\g^*$ are $\ka$-related. Therefore, it suffices to demonstrate that the Hamiltonian vector field 
$X_{\bar G} $ is
	\begin{equation*}
		X_{\bar G}(\xi)=\ka([L,\nabla G(L)]),\text{ for all }\xi=\flat_\ka(L)\in\g^*.
	\end{equation*}
	By the definition of the Lie-Poisson bracket on $\g^*$, we have
\[\Pi_\xi(L,L')=(\xi,[L,L'])\,,\]
	for all $L,L'\in\g\cong\g^{**}$.
	Thus, for $\xi=\flat_\ka(L)$, we obtain
	\begin{equation}\label{unu}
		\Pi_\xi(\nabla G(L),L')=(\xi,[\nabla G(L),L'])=\ka(L,[\nabla G(L),L'])=\ka([L,\nabla G(L)],L').
	\end{equation}
	On the other hand,  the Hamiltonian vector field associated with Hamiltonian function $\bar G$ satisfies
	\begin{equation}\label{doi}
		\Pi_\xi(\d\bar G(\xi),L')=(X_{\bar G}(\xi),L').
	\end{equation}	
	Since $\nabla G(L)=\d\bar G(\xi)$, where $\d\bar G(\xi)\in\g^{**}\cong\g$,
	the identity \eqref{xbarg} follows from equations \eqref{unu} and \eqref{doi}.	
\end{proof}

\begin{thm}\label{egalitate}
	Let $({\g},[\cdot,\cdot])$ be a semisimple Lie algebra endowed with the Poisson bracket defined by \eqref{2}. 
	Then the nonlinear DB vector field coincides with the classical DB vector field
	$$\partial_{\M}G(L)=[L,[L,\nabla G(L)]]\,.$$
\end{thm}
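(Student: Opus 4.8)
The plan is to derive the formula directly from the two facts already in hand: the general expression for $\partial_\M G$ in Remark~\ref{r|} and the explicit form of the Hamiltonian vector field in Lemma~\ref{xg}. Throughout, the pseudo-Riemannian metric $g$ on $\g$ is the Killing metric, so $g=\ka$ and $\flat_g=\flat_\ka$.

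First I would invoke \eqref{calc}, which gives $\partial_\M G=(\srp\o\flat_\ka)(X_G)$. By Lemma~\ref{xg} we already know $X_G(L)=[L,\nabla G(L)]$, the gradient being taken relative to $\ka$. Hence the whole computation reduces to understanding the pointwise linear operator $\srp\o\flat_\ka\colon T_L\g\to T_L\g$ and evaluating it on the vector $X_G(L)$.

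The key step, and the only one requiring an argument, is to identify $\srp\o\flat_\ka$ at the point $L$ with $\ad_L=[L,\cdot\,]$. This is in fact a rereading of the computation \eqref{unu} used to prove Lemma~\ref{xg}: for an arbitrary smooth $F$ one has $X_F=\srp(\d F)=(\srp\o\flat_\ka)(\nabla F)$, while $X_F(L)=[L,\nabla F(L)]$; since $\nabla F(L)$ may be an arbitrary element of $T_L\g=\g$ as $F$ varies, the pointwise linear map $\srp\o\flat_\ka$ must send every $w\in T_L\g$ to $[L,w]$. Equivalently, one reads \eqref{unu} with $\nabla G(L)$ replaced by an arbitrary tangent vector.

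Combining these, I would conclude $\partial_\M G(L)=(\srp\o\flat_\ka)\big(X_G(L)\big)=\big[L,X_G(L)\big]=\big[L,[L,\nabla G(L)]\big]$, which is the claimed identity. I do not expect any genuine obstacle: the substance is entirely contained in Remark~\ref{r|} and Lemma~\ref{xg}, and the proof amounts to composing $\ad_L$ with itself. The one point worth stating carefully is that the precise normalization (and sign) of $\ka$ is immaterial, since the same metric defines both $\flat_\ka$ and the gradient $\nabla$, so any rescaling cancels and $\srp\o\flat_\ka=\ad_L$ unchanged.
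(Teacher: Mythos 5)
Your proof is correct, but it follows a genuinely different route from the paper's. The paper transports the whole computation to the dual: for $\xi=\flat_\ka(L)$ it evaluates the pairing $(\pa_\M\bar G(\xi),L')$ against an arbitrary $L'\in\g$, unwinds it through the Lie--Poisson structure on $\g^*$ to get $\ka([L,X_G(L)],L')$, and then invokes the $\ka$-relatedness of the double bracket vector fields on $\g$ and $\g^*$ to descend to $\pa_\M G(L)=[L,X_G(L)]$, after which Lemma~\ref{xg} finishes the argument. You instead stay entirely on $\g$: starting from \eqref{calc}, you reduce everything to the pointwise operator $\srp\o\flat_\ka\colon T_L\g\to T_L\g$, and you identify this operator with $\ad_L$ by an exhaustion argument---Lemma~\ref{xg} forces $(\srp\o\flat_\ka)(\nabla F(L))=[L,\nabla F(L)]$ for every $F$, and gradients at $L$ (for instance of the linear functions $F_w=\ka(w,\cdot)$, whose gradient is the constant $w$) sweep out all of $\g$. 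This is a legitimate shortcut: it reuses Lemma~\ref{xg} as a black box twice (once to pin down the operator, once for $X_G$ itself), whereas the paper essentially re-runs the pairing computation of Lemma~\ref{xg} a second time on $\g^*$. What your route buys is brevity, no recourse to the dual or to relatedness of the two double bracket fields, and the conceptually transparent statement $\srp\o\flat_\ka=\ad_L$, hence $\pa_\M G=\ad_L\circ\ad_L\circ\nabla G$; what the paper's route buys is a self-contained explicit computation that never needs the ``gradients exhaust the tangent space'' step. Your closing remark on normalization is also sound: rescaling $\ka\to c\,\ka$ rescales $\srp$ by $1/c$ and $\flat_\ka$ by $c$, so the composite $\srp\o\flat_\ka$, and with it the asserted identity, is unaffected.
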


\begin{proof}
	As before, we work on $\g^*$, so let $\xi=\flat_\ka(L)$. The formula \eqref{calc} implies
	\begin{align*}
		(\pa_\M\bar G(\xi),L')&=\Pi_{\xi}(\flat_\ka(X_{\bar G})(\xi),L')=\Pi_{\xi}(X_{G}(L),L')=(\xi,[X_G(L),L'])\\
		&=\ka(L,[X_G(L),L'])=\ka([L,X_G(L)],L'),
	\end{align*}
	hence $\pa_\M\bar G(\flat_\ka(L))=\ka([L,X_G(L)])$. By using the fact that the nonlinear DB vector fields on $\g$ and $\g^*$ are $\ka$-related, 
	we get $\pa_\M G(L)=[L,X_G(L)]$. Now the Lemma \ref{xg} yields the conclusion. 
\end{proof}

\color{black}

\section{Gradient nature of the generalized double bracket vector field}
\label{Sec3}

\medskip
In this section, we demonstrate that the GDB vector field defined in Definition \ref{dbv}, when restricted to a leaf, behaves as a gradient vector field with respect to a metric that we refer to as the DB metric. This metric generalizes the normal metric defined on the regular adjoint orbits of a semisimple compact Lie algebra. It is important to note that when the ambient metric has an indefinite signature, caution is required when dealing with submanifolds, as they may not be pseudo-Riemannian with respect to the induced metric \cite{O'Neill}.

\medskip \noindent
Now, let $(M,g,\Pi)$ be a Poisson manifold equipped with a pseudo-Riemannian metric.	As a consequence of Lemma \ref{bigdiagram}, we have the inclusion $\Im \sharp_\M \subseteq \Im \sharp_\Pi$. Points where the two images do not coincide require more precision:
	
	\begin{defn} \label{Msing}
		A point $m\in M$ is called \textbf{$\M$-regular} if the image of the map $\Pi$ at $m$ coincides with the image of the map  $\sharp_\M$ at $m$, $\Im \sharp_\Pi \vert_m = \Im \sharp_\M\vert_m$.
        Otherwise, the point $m$ is referred to as \textbf{$\M$-singular}.
	\end{defn}
	
	\begin{prop}\label{singular}
		Let $S$ denote a symplectic leaf of $(M,g,\Pi)$, and let $\iota \colon  S \hookrightarrow M$ be the inclusion map. The $2$-tensor field induced by $g$ on $S$, $g_{\ind}^S := \iota^* g$, is degenerate at $s\in S$ if and only if $s$ is $\M$-singular.
	\end{prop}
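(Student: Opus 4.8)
The plan is to reduce the whole statement to linear algebra at the single point $s$, working inside $T_sM$ and $T_s^*M$. Write $V:=\Im\srp\vert_s$, which is exactly the tangent space $T_sS$ of the symplectic leaf through $s$. Since $g_{\ind}^S=\iota^*g$ is by definition the restriction of $g$ to $T_sS=V$, the two-tensor $g_{\ind}^S$ is degenerate at $s$ precisely when $g\vert_V$ has a nontrivial kernel, i.e.\ when $V\cap V^{\perp_g}\neq\{0\}$, where $V^{\perp_g}$ is the $g$-orthogonal complement. Thus the statement will follow once I show that $s$ is $\M$-regular if and only if $g\vert_V$ is nondegenerate.

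First I would compute $\Im\srm\vert_s$ via Lemma \ref{bigdiagram}: since $\srm=-\srp\o\flat_g\o\srp$ and the inner $\srp$ is surjective onto $V$, applying $\flat_g$ and then $\srp$ gives $\Im\srm\vert_s=\srp(\flat_g(V))$ (the overall sign being irrelevant for the image). I also need the standard fact, coming from the antisymmetry of the bivector $\Pi$, that $\Ker\srp\vert_s=\Ann(V)$, the annihilator in $T_s^*M$ of $\Im\srp\vert_s$; this is the only place where skew-symmetry enters.

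The hard part is the dimension bookkeeping that converts the condition $\Im\srm\vert_s=V$ into a statement about $g\vert_V$. Because $\srp$ maps $T_s^*M$ onto $V$ with kernel $\Ann(V)$, one has $\srp(\flat_g(V))=V$ if and only if $\flat_g(V)+\Ann(V)=T_s^*M$. As $\flat_g$ is an isomorphism ($g$ being nondegenerate on $M$), $\dim\flat_g(V)=\dim V$ and $\dim\Ann(V)=\dim M-\dim V$, so a dimension count shows the sum fills $T_s^*M$ exactly when $\flat_g(V)\cap\Ann(V)=\{0\}$. Finally I would identify this intersection: a covector $\flat_g(v)$ with $v\in V$ lies in $\Ann(V)$ iff $g(v,w)=0$ for all $w\in V$, i.e.\ iff $v\in V^{\perp_g}$; hence $\flat_g(V)\cap\Ann(V)=\flat_g(V\cap V^{\perp_g})$, which, by injectivity of $\flat_g$, vanishes iff $V\cap V^{\perp_g}=\{0\}$. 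Chaining the equivalences yields that $s$ is $\M$-regular iff $g\vert_V$ is nondegenerate, which is precisely the claim in its negated form. The one point deserving care is that $\M$-regularity is defined as \emph{equality} of the images $\Im\srp\vert_s$ and $\Im\srm\vert_s$; the inclusion $\Im\srm\subseteq\Im\srp$ noted before Definition \ref{Msing} guarantees that testing this equality is the same as testing $\srp(\flat_g(V))=V$, and not merely an equality of dimensions.
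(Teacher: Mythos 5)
Your proof is correct, and it takes a route that differs from the paper's in a meaningful way. The paper works with \emph{kernels}: using $g^S_{\ind}(\srp\alpha,\srp\beta)=\beta(\srm\alpha)$ (which is just Definition \ref{Mdef} restricted to leaf vectors), it characterizes degeneracy of $g^S_{\ind}$ as the strict inclusion $\Ker\srp\subsetneq\Ker\srm$, and then passes to images by rank--nullity together with the a priori inclusion $\Im\srm\subseteq\Im\srp$. You instead work with \emph{images}: you compute $\Im\srm\vert_s=\srp(\flat_g(V))$ from Lemma \ref{bigdiagram}, use $\Ker\srp\vert_s=\Ann(V)$ to translate $\srp(\flat_g(V))=V$ into $\flat_g(V)+\Ann(V)=T_s^*M$, and then a dimension count plus the identification $\flat_g(V)\cap\Ann(V)=\flat_g(V\cap V^{\perp_g})$ reduces everything to nondegeneracy of $g\vert_V$. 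The paper's argument is shorter and avoids dimension bookkeeping entirely; yours is longer but buys an explicit geometric picture: it exhibits the obstruction to $\M$-regularity as the radical $V\cap V^{\perp_g}$, i.e.\ as null directions of $g$ tangent to the leaf, which is precisely the phenomenon the paper later illustrates with the light cones and red zones in Sections \ref{sec:5}--\ref{sec:6}, and it also yields the useful formula $\Im\srm\vert_s=\srp(\flat_g(T_sS))$. Your closing remark --- that the a priori inclusion $\Im\srm\subseteq\Im\srp$ reduces testing equality of images to testing surjectivity of $\srp\o\flat_g$ onto $V$ --- correctly handles the one subtlety in your reduction; both proofs rely on that inclusion at the analogous step.
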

	
	\begin{proof} Since $\srp \colon  T^*M \rightarrow TS$ is surjective, $g^S_{\ind}$ is non-degenerate
	if and only if $$g^S_{\ind}(\srp \alpha, \srp \beta ) = 0\,, \quad\forall\beta \in T^*M\,.$$ 
  It implies $\alpha \in \ker \srp$. The induced metric
	$g^S_{\ind}$ coincides with $g$ upon evaluation on vectors tangent to $S$. By  Definition \ref{Mdef}, we have
	$$ g^S_{\ind}(\srp \alpha, \srp \beta ) =\beta (\srm \alpha)\, ,  $$ for all 
	$\alpha, \beta \in T^*M$. The right-hand-side vanishes for all $\beta \in T^*M$ if and only if $\alpha \in \ker \srm$. 
	Consequently, $g^S_{\ind}$ is degenerate 
	if and only if $\Ker \srp \subsetneq \Ker \srm $. This is equivalent to the strict inclusion $\Im \srm \subsetneq \Im \srp $, hence the conclusion.
	\end{proof} 

{\begin{rem}\label{31}
\rm		For a Riemannian metric $g$, all points in $M$ are $\mathcal{M}$-regular. This implies that the $\mathcal{M}$-distribution $\Im\sharp_{\mathcal{M}}$ coincides with the characteristic distribution of the Poisson manifold, ensuring its integrability.
In contrast, , for a pseudo-Riemannian metric $g$ with signature, the integrability of the $\mathcal{M}$-distribution is not guaranteed. 
	\end{rem}
	
\begin{defn}
	We call a symplectic leaf $S\subset M$  a \textbf{good symplectic leaf} if the induced metric $g^S_{\ind}$ is non-degenerate.
	Equivalently, $S$ is a good symplectic leaf if all its points are $\M$-regular. 
	\end{defn}
    \begin{exmp}
For a Riemannian metric $g$ on $M$,  all symplectic leaves are good leaves. This is due to the fact that a Riemannian metric is positive-definite, ensuring that the induced metric on each symplectic leaf is non-degenerate and well-defined. Consequently, every symplectic leaf possesses the necessary geometric properties to be classified as a good leaf.
    \end{exmp}
\begin{exmp}
    Consider $M=\R^4 \ni (w,x,y,z)$ equipped with the pseudo-Riemannian metric
    $g = 2 \,\mathrm{d} w \mathrm{d} x + 2 \, \mathrm{d} y \mathrm{d} z$ 
    and the Poisson bivector field $\Pi := \partial_x \wedge \partial_y$. Each symplectic leaf $S$ is a plane 
    characterized by $w=w_0$, $z=z_0$ for some $(w_0,z_0)\in \R^2$. 
    On every $S$, since $\d w\vert_S = 0$ and $\d z\vert_S = 0$\,, the induced metric vanishes identically,
    $ g^S_{\ind} = 0\,.$ Correspondingly, as a straightforward matrix calculation demonstrates , see \eqref{prod}, 
    the metriplectic tensor field vanishes identically as well, 
     $ \M = 0$\,. Consequently,  $\Im \sharp_\M = \{ 0 \} \neq 
     \mathrm{Vect}(\partial_x,\partial_y) = \Im \sharp_\Pi$. Every point $m \in \R^4$ is $\M$-singular. This example contains no good symplectic leaves. 
  \end{exmp}   
  \begin{exmp}
  A classical example illustrating a combination of good symplectic leaves and symplectic leaves consisting exclusively of $\M$-singular points is the case of the non-compact semisimple Lie algebra $\mathfrak{sl}(2,\RR)$ with Lie-Poisson structure and Killing metric. The structure includes:
  \begin{itemize}
 \item The hyperboloids represent good symplectic leaves.  
 \item The two connected components of the cone, excluding the vertex, comprise only $\M$-singular points.
  \end{itemize}   
  \end{exmp}
 \noindent 
More sophisticated examples featuring symplectic leaves that contain both $\mathcal{M}$-regular and $\mathcal{M}$-singular points will be explored in future research.

\medskip
In what follows we will define the DB metric on a good symplectic leaf and we will prove that this metric generalizes the normal/standard metric on adjoint orbits of a semisimple compact Lie algebra.

\begin{defn}\label{double-bracket-metric}
	The \textbf{double bracket (DB) metric} on a good symplectic leaf $S$  is the pseudo-Riemannian metric
	defined as
	\begin{equation}		{\tau}_{\db}^{S}\left(X,Y\right):=({g}_{\ind}^{S})^{-1}\left({ i}_{X}\omega^S,{i}_{Y}\omega^S\right),\quad X,Y\in\X(S)\,, \label{DB}  
	\end{equation}		
	where $\omega^S$ is the induced symplectic form on the leaf $S$ and  $({g}_{\ind}^{S})^{-1}$ is the co-metric tensor field 
	associated to the pseudo-Riemannian metric ${g}_{\ind}^{S}$ induced on $S$ by the ambient metric.
\end{defn}

\begin{lem}\label{smalldiagram}
	The following identity 
	$\flat_{\ta^S_{\db}}=-\flat_{\om^S}\o\sharp_{g^S_\ind}\o\flat_{\om^S}$ holds on every good symplectic leaf.
\end{lem}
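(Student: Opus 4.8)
The plan is to mirror the proof of Lemma~\ref{bigdiagram} at the level of the leaf $S$: everything reduces to unwinding the definitions of the three musical isomorphisms $\flat_{\ta^S_{\db}}$, $\flat_{\om^S}$, $\sharp_{g^S_\ind}$ and inserting the antisymmetry of $\om^S$ at the right moment. The only structural input is that on a good symplectic leaf the induced metric $g^S_\ind$ is non-degenerate by Proposition~\ref{singular}, so that $\sharp_{g^S_\ind}$, the inverse of $\flat_{g^S_\ind}$, is well defined; this is precisely what licenses the appearance of the co-metric $(g^S_\ind)^{-1}$ in Definition~\ref{double-bracket-metric}.

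First I would translate the co-metric into the sharp map via the identity $(g^S_\ind)^{-1}(\al,\be)=\be(\sharp_{g^S_\ind}\al)$, valid for all $\al,\be\in T^*S$. Applying this to $\al=i_X\om^S=\flat_{\om^S}(X)$ and $\be=i_Y\om^S=\flat_{\om^S}(Y)$ gives
\[
\ta^S_{\db}(X,Y)=\big(\flat_{\om^S}(Y)\big)\big(\sharp_{g^S_\ind}\flat_{\om^S}(X)\big)=\om^S\big(Y,\sharp_{g^S_\ind}\flat_{\om^S}(X)\big).
\]
Now the single decisive step is to use the antisymmetry $\om^S(Y,Z)=-\om^S(Z,Y)$ with $Z=\sharp_{g^S_\ind}\flat_{\om^S}(X)$, which rewrites the right-hand side as $-\big(\flat_{\om^S}\o\sharp_{g^S_\ind}\o\flat_{\om^S}\big)(X)(Y)$. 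Comparing with $\ta^S_{\db}(X,Y)=\big(\flat_{\ta^S_{\db}}(X)\big)(Y)$ and letting $Y$ range over all of $\X(S)$ yields the asserted identity.

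I do not expect a genuine obstacle here; the computation is routine and essentially identical to Lemma~\ref{bigdiagram}, with $\flat_{\om^S}$ playing the role of $\sharp_\Pi$ and $\sharp_{g^S_\ind}$ that of $\flat_g$. The one point requiring care is the sign, which comes entirely from the antisymmetry of $\om^S$ (mirroring the antisymmetry of $\Pi$ in the earlier lemma), together with the prior check that $g^S_\ind$ is invertible on a good leaf or a green zone, without which the co-metric and hence the whole statement would be meaningless.
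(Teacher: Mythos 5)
Your proposal is correct and follows essentially the same route as the paper's own proof: both unwind the definition of $\ta^S_{\db}$ via the identity $(g^S_\ind)^{-1}(\al,\be)=\be(\sharp_{g^S_\ind}\al)$, rewrite everything through $\flat_{\om^S}$, and obtain the sign from the antisymmetry of $\om^S$, with non-degeneracy of $g^S_\ind$ on a good leaf guaranteeing that $\sharp_{g^S_\ind}$ is defined. Nothing further is needed.
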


\begin{proof}
The proof proceeds through the following straightforward computation:
	\begin{align*}
		(\flat_{\ta^S_{\db}}X)(Y)&=\ta(X,Y)=({g}_{\ind}^{S})^{-1}\left({ i}_{X}\omega^S,{i}_{Y}\omega^S\right)
		=( \flat_{\om^S}Y)(\sharp_{\srind} \flat_{\om^S}X)\\
		&=\om^S(Y,\sharp_{\srind} \flat_{\om^S}X)
		=-\om^S(\sharp_{\srind} \flat_{\om^S}X,Y)
		=-\flat_{\om^S}\sharp_{\srind} \flat_{\om^S}(X)(Y)
	\end{align*}
	for all $X,Y\in\X(S)$. 
\end{proof}
\smallskip 
In the finite-dimensional case, for a point $x$ on the symplectic leaf $S$, the matrix  representation of the co-metric tensor field $({ g}_{\ind}^{S})^{-1}$ is given by
$$[{g}_{\ind}^{S}(x)]^{-1}=[{g}(x)|_{{T_xS \times T_xS}}]^{-1}\,.$$
Consequently, the matrix associated with the DB metric ${\tau}_{\db}^{S}$ can be expressed as
$$[{\tau}_{\db}^{S}(x)]=[\omega^S(x)]^{T}[{g}_{\ind}^{S}(x)]^{-1}[\omega^S(x)]=-[\omega^S(x)][{g}_{\ind}^{S}(x)]^{-1}[\omega^S(x)]\,.$$

\begin{lem}\label{leftdiagram}
	The following identity 
	$\srm=\io_*\o\sharp_{\ta_{\db}^S}\o \io^*$ holds for every good symplectic leaf $S$, where $\io\colon S\to M$ denotes the immersion of the leaf into the ambient manifold; see the diagram below.
\end{lem}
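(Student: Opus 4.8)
The plan is to obtain the identity purely formally, by gluing together the two ``diagram'' lemmas already proved---Lemma \ref{bigdiagram} and Lemma \ref{smalldiagram}---with two elementary intertwining relations along the immersion $\io$. First I would pass from the flat form of Lemma \ref{smalldiagram} to a sharp form. On a good symplectic leaf the induced metric $g^S_\ind$ is non-degenerate and $\om^S$ is symplectic, so $\flat_{\om^S},\sharp_{\om^S},\flat_{g^S_\ind},\sharp_{g^S_\ind}$ are all isomorphisms; hence $\flat_{\ta^S_{\db}}$ is an isomorphism (which is exactly what makes $\ta^S_{\db}$ a bona fide pseudo-Riemannian metric and legitimizes forming $\sharp_{\ta^S_{\db}}$), and inverting the relation of Lemma \ref{smalldiagram} gives
$$\sharp_{\ta^S_{\db}}=-\,\sharp_{\om^S}\o\flat_{g^S_\ind}\o\sharp_{\om^S}.$$

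Next I would record the two intertwining relations, noting that $\io^*\colon T^*M\to T^*S$, $\sharp_{\om^S}\colon T^*S\to TS$ and $\io_*\colon TS\to TM$ compose to a map $T^*M\to TM$ with image in $TS$, matching the domain and range of $\srp$. The first relation is immediate from $g^S_\ind=\io^*g$: for $v,w\in TS$ one has $(\io^*\o\flat_g\o\io_*)(v)(w)=g(\io_*v,\io_*w)=g^S_\ind(v,w)$, so that $\flat_{g^S_\ind}=\io^*\o\flat_g\o\io_*$. The second relation is the compatibility of the Poisson anchor with the leaf symplectic form: for $f\in C^\infty(M)$ the Hamiltonian vector field $X_f=\srp(\d f)$ is tangent to $S$ and restricts there to the Hamiltonian vector field of $\io^*f$ on $(S,\om^S)$, i.e. $\flat_{\om^S}(X_f)=\pm\,\io^*(\d f)$. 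Since the covectors $\d f(m)$ exhaust $T_m^*M$ at each point, this yields
$$\srp=\pm\,\io_*\o\sharp_{\om^S}\o\io^*,$$
where the overall sign is fixed only by the Hamiltonian-vector-field convention.

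Finally I would assemble the pieces. Substituting the sharp form of Lemma \ref{smalldiagram}, then inserting $\flat_{g^S_\ind}=\io^*\o\flat_g\o\io_*$ and regrouping, I obtain
\begin{align*}
\io_*\o\sharp_{\ta^S_{\db}}\o\io^*
&=-\,\io_*\o\sharp_{\om^S}\o\flat_{g^S_\ind}\o\sharp_{\om^S}\o\io^*\\
&=-\,(\io_*\o\sharp_{\om^S}\o\io^*)\o\flat_g\o(\io_*\o\sharp_{\om^S}\o\io^*)\\
&=-\,\srp\o\flat_g\o\srp=\srm,
\end{align*}
where in the last line the factor $\io_*\o\sharp_{\om^S}\o\io^*=\pm\srp$ enters squared, so its sign is washed out, and the final equality is Lemma \ref{bigdiagram}. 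I expect the only genuine content---and hence the main obstacle---to be the second intertwining relation $\srp=\pm\,\io_*\o\sharp_{\om^S}\o\io^*$, namely the precise compatibility of the Poisson anchor with the symplectic structure of the leaf; once this standard fact is pinned down and one observes that its sign is immaterial, the remainder is bookkeeping with the two diagram lemmas.
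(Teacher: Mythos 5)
Your proof is correct and is essentially the paper's own argument: both rest on composing Lemma \ref{bigdiagram} with the sharp form of Lemma \ref{smalldiagram}, via the factorization $\srp=\io_*\o\sharp_{\om^S}\o\io^*$ of the Poisson anchor through the leaf and the relation $\flat_{g^S_\ind}=\io^*\o\flat_g\o\io_*$; the paper just runs the same chain of identities in the opposite direction, starting from $\srm=-\srp\o\flat_g\o\srp$. Your extra care with the sign convention in the anchor factorization (and the observation that it is squared away) is a harmless refinement of a step the paper uses implicitly.
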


\begin{proof}
This can be shown through the following direct computation:
	\begin{align*}
		\srm&=-\srp\o\flat_g\o\srp=-\io_*\o\sharp_{\om^S}\o \io^*\o\flat_g\o \io_*\o\sharp_{\om^S}\o \io^*\\
		&=-\io_*\o\sharp_{\om^S}\o\flat_{g_{\ind}^S}\o\sharp_{\om^S}\o \io^*=\io_*\o\sharp_{\ta_{\db}^S}\o \io^*\,,
	\end{align*}
    where we used Lemma \ref{bigdiagram} in the first step and Lemma \ref{smalldiagram} in the fourth step.
\end{proof}	

 The commutative diagram below summarizes the lemmas \ref{bigdiagram}, \ref{smalldiagram}, and \ref{leftdiagram}.
  \begin{figure}[H]
    \centering
    \label{fig} 
    $$\xymatrix@C-10pt@R-5pt{
        &  T^*M|_S\ar[rrr]^{\sharp_{\Pi}} \ar[dr]^{\iota^*} \ar[ddl]^{\sharp_\M}
        & & & T M|_S\ar[ddl]\ar[ddl]^{\flat_{\g}} \\
        & & T^*S\ar[rrr]^{\sharp_{\omega^{S}}}\ar[ddl]^{\sharp_{\tau_{DB}^S}}
        & & &  T S \ar[ul]_{\iota_*}\ar[ddl]^{\flat_{{\g}_{ind}^S}}\\
        T M|_S
        & & &  T^* M|_S\ar[dr]^{\iota^*}\ar[lll]_{-\sharp_{\Pi}}& &  \\
        &  T S\ar[ul]_{\iota_*}& & & T^*S\ar[lll]_{-\sharp_{{\omega}^S}} &
    }$$
\end{figure}
\medskip

Now we will explore the relationship between the GDB vector field and the gradient in the context of Poisson manifold.
\begin{thm}\label{gradient th}
	Let $M$ be a smooth manifold equipped with a pseudo-Riemannian structure and  with a Poisson structure. 
	On a good symplectic leaf $S$, for $G\in C^\oo(M)$,
	the GDB vector field $\pa_\M G$, is the negative of the gradient vector field of $G|_{S}$ with respect to the DB metric:
	\begin{equation} \label{thmeq}
		(\pa_\M G)(x)=-\nabla_{\ta^S_{\db}} {(G|_{S})}(x),\quad x\in S\,. 
	\end{equation}
\end{thm}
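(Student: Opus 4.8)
The plan is to read the statement straight off the commutative diagram: essentially everything needed has already been packaged into Lemma \ref{leftdiagram}, so the proof should be a short chase through the definitions rather than a computation. The only genuine input beyond that lemma is the standard identification of the gradient of a function on a pseudo-Riemannian manifold with the metric-dual of its differential.

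First I would recall from Definition \ref{dbv} (and Remark \ref{r|}) that $\pa_\M G = -\srm(\d G)$ as a section of $TM$, and that by Lemma \ref{bigdiagram} this vector field is tangent to $S$, so that its restriction to $S$ may be regarded as a section of $TS$ through $\io_*$. Then I would apply Lemma \ref{leftdiagram}, which on a good leaf gives $\srm = \io_*\o\sharp_{\ta_{\db}^S}\o\io^*$ over $S$. Evaluating on $\d G$ and using naturality of the exterior derivative, $\io^*\d G = \d(G\o\io) = \d(G|_S)$, I obtain
$$ (\pa_\M G)|_S = -\io_*\big(\sharp_{\ta^S_{\db}}(\d(G|_S))\big). $$

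Second I would identify $\sharp_{\ta^S_{\db}}(\d(G|_S))$ with the gradient $\nabla_{\ta^S_{\db}}(G|_S)$. This is exactly the definition of the gradient: $\nabla_{\ta^S_{\db}} f$ is the unique vector field with $\flat_{\ta^S_{\db}}(\nabla_{\ta^S_{\db}}f) = \d f$, that is $\nabla_{\ta^S_{\db}}f = \sharp_{\ta^S_{\db}}(\d f)$. For this to be legitimate one needs $\ta^S_{\db}$ to be a genuine, non-degenerate metric: on a good leaf the co-metric $(g^S_{\ind})^{-1}$ exists because $g^S_{\ind}$ is non-degenerate (Proposition \ref{singular}), and $\om^S$ is non-degenerate as a symplectic form, so by Lemma \ref{smalldiagram} the map $\flat_{\ta^S_{\db}} = -\flat_{\om^S}\o\sharp_{g^S_\ind}\o\flat_{\om^S}$ is invertible and $\sharp_{\ta^S_{\db}}$ is well-defined. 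Substituting this into the displayed equation and identifying $TS$ with its image under $\io_*$ yields $(\pa_\M G)(x) = -\nabla_{\ta^S_{\db}}(G|_S)(x)$ for every $x\in S$, which is \eqref{thmeq}.

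I do not expect any serious obstacle, since all the analytic and algebraic content has been front-loaded into Lemma \ref{bigdiagram}, Lemma \ref{smalldiagram} and Lemma \ref{leftdiagram}. The one point deserving care is the \emph{well-definedness} of the gradient, which is precisely where the hypothesis that $S$ be a good leaf (equivalently, that all its points be $\M$-regular) enters, through the non-degeneracy of $g^S_{\ind}$; the same argument then applies verbatim over a green zone, since there too $g^S_{\ind}$ is non-degenerate.
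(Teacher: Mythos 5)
Your proof is correct and follows essentially the same route as the paper's: both reduce the statement to Lemma \ref{leftdiagram}, use the naturality identity $\io^*\d G=\d(G|_S)$, and then identify $\sharp_{\ta^S_{\db}}(\d (G|_S))$ with the gradient, your extra check that $\flat_{\ta^S_{\db}}$ is invertible on a good leaf being exactly where the hypothesis enters. If anything, your sign bookkeeping (starting from $\pa_\M G=-\srm(\d G)$ and using the standard convention $\nabla_{\ta^S_{\db}}f=\sharp_{\ta^S_{\db}}(\d f)$) is more consistent with Definition \ref{dbv} and Remark \ref{r|} than the paper's own displayed computation, which writes $(\pa_\M G)|_S=(\srm(\d G))|_S$ and $\sharp_{\ta^S_{\db}}(\d(G|_S))=-\nabla_{\ta^S_{\db}}(G|_S)$, i.e.\ places two compensating minus signs differently.
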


\begin{proof}
	Using the Lemma \ref{leftdiagram}, we have,
	\begin{align*}
		(\partial_{\M}G)|_S&=(\srm(\d G))|_S=\io_*\sharp_{\ta_{\db}^S}(\io^*(\d G)|_S)\\
		&=\io_*\sharp_{\ta_{\db}^S}(\d(G|_S))=-\io_*\nabla_{\ta^S_{\db}} {(G|_{S})}\,,
	\end{align*}
	hence the equality in \eqref{thmeq}.
\end{proof}

\medskip
Next, we will demonstrate that in the case of a compact semisimple Lie algebra, the DB metric introduced above coincides with the normal metric (as defined in equation \eqref{nor}) up to a sign. 

\begin{thm}
	Let ${\g}$ be a semisimple compact Lie algebra and let $S$ be a regular adjoint orbit in $\g$. Then
	$${\tau}_{\db}^{S}=-{\nu}^{S}.$$
\end{thm}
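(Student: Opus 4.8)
The plan is to avoid computing either metric explicitly and instead to deduce the identity from the three results already in place — Theorem \ref{egalitate}, Theorem \ref{gradient th}, and the recalled Bloch--Brockett--Ratiu Theorem \ref{bloch-ratiu} — by showing that $\tau_{\db}^S$ and $-\nu^S$ induce the \emph{same} gradient operator on a family of functions rich enough to determine the metric. Throughout, the ambient metric is the Killing metric $g=\ka$, which is (definite, hence) nondegenerate, so its restriction to $T_LS=\g_L$ is nondegenerate and every adjoint orbit $S$ is a good symplectic leaf; Theorem \ref{gradient th} therefore applies.

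Concretely, I would fix a regular element $N\in\g$ and consider the linear function $H_N(L):=\ka(L,N)$, whose $\ka$-gradient is the constant $\nabla_\ka H_N=N$. Theorem \ref{egalitate} with $G=H_N$ gives $\partial_\M H_N(L)=[L,[L,N]]$, while Theorem \ref{gradient th} gives $\partial_\M H_N=-\nabla_{\tau_{\db}^S}(H_N|_S)$; comparing, $\nabla_{\tau_{\db}^S}(H_N|_S)(L)=-[L,[L,N]]$. On the other hand, Theorem \ref{bloch-ratiu} yields $\nabla_{\nu^S}(H_N|_S)(L)=[L,[L,N]]$. Hence
\[
\nabla_{\tau_{\db}^S}(H_N|_S)(L)=-\nabla_{\nu^S}(H_N|_S)(L)=\nabla_{-\nu^S}(H_N|_S)(L)
\]
for every regular $N$ and every $L\in S$.

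It then remains to upgrade this pointwise equality of gradients to the equality of metrics. Here I would use that the gradient map $f\mapsto\nabla_h f$ is exactly the co-metric sharp composed with $\d$, namely $\nabla_h f=\sharp_h(\d f)$. The displayed identity thus says that $\sharp_{\tau_{\db}^S}$ and $\sharp_{-\nu^S}$ agree on every covector $\d(H_N|_S)(L)=\iota^*\bigl(\ka(\cdot,N)\bigr)\in T_L^*S$. As $N$ ranges over the regular elements — an open dense, hence spanning, subset of $\g$ — and since $N\mapsto\iota^*(\ka(\cdot,N))$ is a linear surjection onto $T_L^*S$ (because $\ka|_{\g_L}$ is nondegenerate), these covectors span $T_L^*S$. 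Two co-metrics that agree on a spanning set of each cotangent space coincide, so $\sharp_{\tau_{\db}^S}=\sharp_{-\nu^S}$ and therefore $\tau_{\db}^S=-\nu^S$.

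The main obstacle is precisely this last passage: a single Hamiltonian constrains the metric only in one direction, so the crux is the spanning argument — checking that the differentials $\iota^*(\ka(\cdot,N))$ fill out $T_L^*S$ as $N$ varies over the regular set. Everything else is an assembly of the prior lemmas, and the only delicate point is the bookkeeping of signs: the minus sign in Theorem \ref{gradient th} against the sign convention in Bloch--Ratiu is exactly what produces the minus in $\tau_{\db}^S=-\nu^S$. As a consistency check, with $g=\ka$ negative definite the formula $[\tau_{\db}^S]=[\omega^S]^T[g_{\ind}^S]^{-1}[\omega^S]$ makes $\tau_{\db}^S$ negative definite, matching $-\nu^S$ since the normal metric $\nu^S$ is positive definite.
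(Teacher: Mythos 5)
Your proposal is correct and takes essentially the same route as the paper: both combine Theorem \ref{egalitate}, Theorem \ref{gradient th}, and Theorem \ref{bloch-ratiu} to show that the gradients of the linear functions $H_N(L)=\ka(L,N)$, for $N$ regular, with respect to ${\tau}_{\db}^{S}$ and $-{\nu}^{S}$ coincide, and then conclude equality of the two metrics. The only difference is that you explicitly verify the spanning step --- that the covectors $\iota^*\bigl(\ka(\cdot,N)\bigr)$ fill out $T_L^*S$ as $N$ ranges over the regular elements --- whereas the paper compresses this into the unjustified phrase ``it is enough to show''; your version closes that small gap.
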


\begin{proof}
We consider the linear Poisson structure \ref{2} on $\g$. Compactness implies that the Killing form has index zero; therefore, every symplectic leaf $S$ of $\g$ is a good symplectic leaf. It is sufficient to show that for all functions on the symplectic leaf $S\subset \g$ of the form $H(L)=\ka(L,N)$, with $N$ regular element in $S$, the gradient with respect to the normal metric  ${\nu}^{S}$ is the opposite of the gradient with respect to the DB metric $\boldsymbol{\tau}_{\db}^{S}$\,.

\noindent	By Theorem \ref{bloch-ratiu}, we have ${\nabla}_{\nu^S}H(L)=[L,[L,N]]$ for all $L\in S$.
	On the other hand, by Theorem \ref{egalitate}, we have
	$$\partial_{\M}H(L)=[L,[L,\nabla H(L)]]=[L,[L,N]]\,.$$
Combining these results with Theorem \ref{gradient th}, we obtain ${\nabla}_{\nu^S}H(L)=-{\nabla}_{\ta^S_{\db}}H(L)$, which implies that
	${\nu}^{S}=-{\tau}_{\db}^{S}$.
\end{proof}{\tiny }

\section{Asymptotic stabilization via generalized double bracket vector field}\label{sec:4}
In this section, we will demonstrate how the incorporation of a GDB vector field into Hamiltonian dynamics can renders stable equilibria into asymptotically stable equilibria, while preserving the structure of the symplectic leaves. This method of asymptotic stabilization has been previously utilized in various contexts, as discussed in \cite{BC, Bloch,Moris}. We will specifically apply this technique to the example of two harmonic oscillators in $(n:m)$ resonance, as presented in \cite{holm}. 

\smallskip
We begin with the framework of a Poisson manifold  endowed with a Riemannian metric $(M,\Pi,g)$. In this context, we construct the associated metriplectic tensor field $\M$. As noted in Remark \ref{31}, in this case, all symplectic leaves are classified as good leaves.
\smallskip

Given a Hamiltonian Poisson dynamics on $M$
\[
\dot x=X_H(x),
\]
assume that $x_0$ is a local minimum for the Hamiltonian function $H$ restricted to the symplectic leaf $S_0$ that contains $x_0$,
making it a stable equilibrium point for the dynamics. 
We consider the following dissipative system that preserves the symplectic leaves:
\[
\dot x=X_H(x)+f(x)\pa_{\M}H(x)
\]
with $f$ a smooth function strictly positive on $S_0$. 
By Theorem \ref{gradient th}, on the leaf $S_0$, the above dissipative system becomes
\begin{equation}\label{dis}
\dot x=X_{H_0}(x)-f_0(x)\nabla_{\ta^{S_0}_{\db}} {H_0}(x),
\end{equation}
where $H_0:=H|_{S_0}$ and $f_0:=f|_{S_0}$. Note that, the function $f_0(x)$ is a positive scalar that quantifies the strength of this dissipation. 
\smallskip

By the initial assumption, $x_0$ is a local minimum for $H_0$. 
 For the dissipative system described by equation \eqref{dis}, we compute that:
\[
\dot H_0=-f_0||\nabla_{\ta^{S_0}_{\db}} {H_0}||^2\,,
\]
which indicates that the rate of change of the Hamiltonian is proportional to the negative of the square of its gradient, scaled by a factor $f_0$. This reflects how energy is dissipated in directions where $H_0$ increases, confirming that dissipation acts to stabilize the system by reducing energy.

Thus, $H_0$ serves as a Lyapunov function. Therefore, by Lyapunov's theorem, the stable equilibrium point $x_0$ for the Hamiltonian dynamics becomes an asymptotically stable equilibrium for the dissipative system described by equation \eqref{dis}. 
\smallskip 

Next we apply the asymptotic stabilization procedure described above to the example of two harmonic oscillators in $(n:m)$ resonance.
Let the circle $S^1$ act on $\CC^2$ by $e^{i\th}\cdot(z_1,z_2)=(e^{in\th}z_1,e^{im\th}z_2)$, where $n$ and $m$ are integers. This defines a Hamiltonian action with a corresponding momentum map
\[
R:\CC^2 \to \R,\quad R(z_1,z_2)=\dfrac{n}{2}|z_1|^2+\dfrac{m}{2}|z_2|^2.
\]
The quotient map can be expressed using the $S^1$-invariant functions
\begin{equation*}
(X-iY)(z_1,z_2)=z_1^m\bar z_2^n\quad \text{ and }\quad 
Z(z_1,z_2)=\frac{n}{2}|z_1|^2-\frac{m}{2}|z_2|^2,
\end{equation*}
namely $p:=(X,Y,Z):(\CC\setminus\{0\})^2\to\RR^3\setminus Oz$.
The Poisson maps $R$ and $p$ form a dual pair of Poisson maps on $(\CC\setminus\{0\})^2$ \cite{HV}.

The reduced Poisson structure on $\RR^3\setminus Oz$ is given by
\[
\begin{array}{rcl}
\{y,z\}&=&2mnx,\\
\{z,x\}&=&2mny,\\
\{x,y\}&=&-mn(x^2+y^2)\left (\dfrac{m}{C(x,y,z)+z}-\dfrac{n}{C(x,y,z)-z}\right )\,,\\
\end{array}
\]
with the Casimir function $C$ implicitly defined by
\begin{equation}\label{cas}
x^2+y^2=\left(\frac{C(x,y,z)+z}{n}\right)^m\left(\frac{C(x,y,z)-z}{m}\right)^n\,.
\end{equation}
The symplectic leaves are the
Kummer surfaces: level sets of the Casimir function $C$.
 When $m=n=1$, we obtain the Lie-Poisson structure on $\mathfrak{su}(2)^*$\,. 

The Riemannian metric on $\RR^3\setminus Oz$ for which the projection $p$ becomes a Riemannian submersion is the diagonal metric
$g={\rm Diag}(\be^{-1},\be^{-1},\ga^{-1})$,
with the notations
\begin{gather*}
\begin{array}{rcl}
\be&=&(x^2+y^2)^{\frac12}\left(m^2\left(\dfrac{C+z}{n}\right)^{-\frac12}+n^2\left(\dfrac{C-z}{m}\right)^{-\frac12}\right)\,,\\[10pt]
\ga&=&n(C(x,y,z)+z)+m(C(x,y,z)-z).
\end{array}
\end{gather*}
The additional notation 
\[
\al=(x^2+y^2)\left (\frac{m}{C(x,y,z)+z}-\frac{n}{C(x,y,z)-z}\right )
\]
allows us to express the matrix of the metriplectic tensor field as:
\[
[\M]=\begin{bmatrix} \dfrac{\al^2}{\be}+4m^2n^2\dfrac{y^2}{\ga}&-4m^2n^2\dfrac{xy}{\ga} & 2mn\dfrac{\al x}{\be} \\ 
-4m^2n^2\dfrac{xy}{\ga} & \dfrac{\al^2}{\be}+4m^2n^2\dfrac{x^2}{\ga}&2mn\dfrac{\al y}{\be} \\
2mn\dfrac{\al x}{\be} &2mn\dfrac{\al y}{\be} &4m^2n^2\dfrac{x^2+y^2}{\be}\end{bmatrix}\,.
\]
Given a smooth function $H$, we get the Hamiltonian vector field 
\[
X_H=(-\al H_y-2mnyH_z,\al H_x+2mnxH_z,2mn(yH_x-xH_y))\,,
\]
and so the GDB vector field can be expressed as:
\begin{align*}
\pa_{\M}H=-&((\tfrac{\al^2}{\be}+4m^2n^2\tfrac{y^2}{\ga})H_x-4m^2n^2\tfrac{xy}{\ga} H_y+ 2mn\tfrac{\al x}{\be}H_z,\\
&-4m^2n^2\tfrac{xy}{\ga} H_x+( \tfrac{\al^2}{\be}+4m^2n^2\tfrac{x^2}{\ga})H_y+2mn\tfrac{\al y}{\be} H_z,\\
&2mn\tfrac{\al x}{\be} H_x+2mn\tfrac{\al y}{\be} H_y+4m^2n^2\tfrac{x^2+y^2}{\be}H_z ).
\end{align*}
\noindent 
Adopting the Euclidean metric, in place of the Riemannian metric $g$ derived via the quotient map, leads to an expression for $\partial_{\mathcal{M}} H$ that omits the denominators $\beta$ and $\gamma$. 

In the special case $m=n=3$, the Casimir function $C$ in \eqref{cas} can be expressed as
\[
C(x,y,z)=(z^2+9(x^2+y^2)^{1/3})^{1/2}.
\] 
We consider the Hamiltonian function $H(x,y,z)=az^2-x$.
It is straightforward to verify that each point on the $Ox$ satisfies the condition of being a local minimum for $H$ when restricted to its symplectic leaf. Consequently, these points are stable equilibrium points for the Hamiltonian dynamics on their respective symplectic leaves. By adding a scaled GDB vector field to the initial Hamiltonian dynamics, as described in equation \eqref{dis}, these equilibria become asymptotically stable for this dissipative system.

\smallskip
This transformation is illustrated in Figure \ref{julia}: the stable equilibrium in (A) for $X_H$ is transformed into an asymptotically stable equilibrium in (B) for $X_H+c\pa_{\M}H$\,.

\begin{figure}[H]
\subfloat[a][]
	\centering
	\scalebox{0.45}{\includegraphics{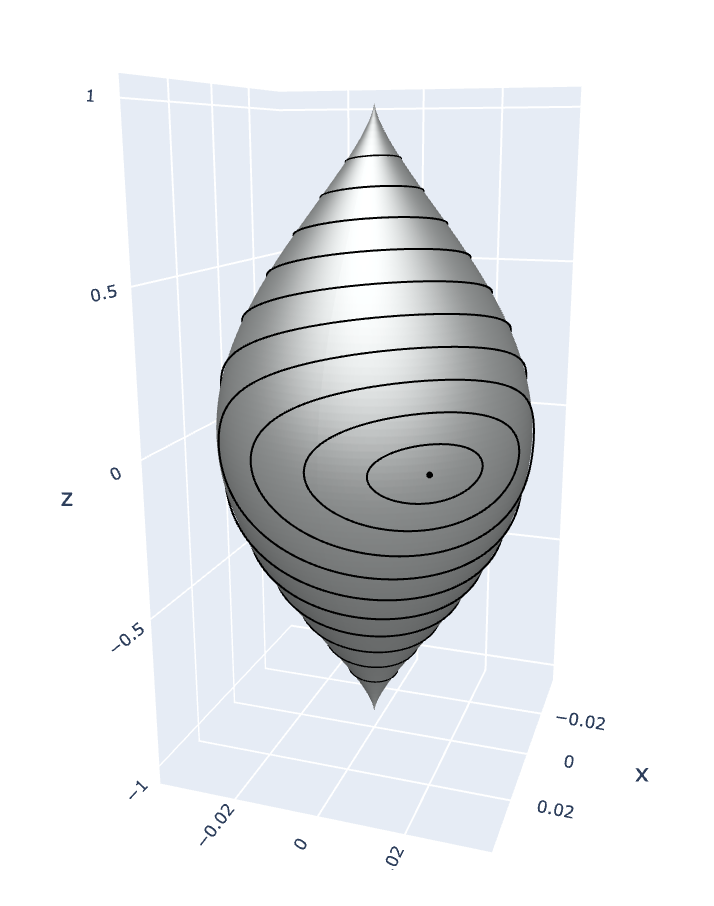}} 
	\subfloat[c][]
	{\includegraphics[scale=0.45]{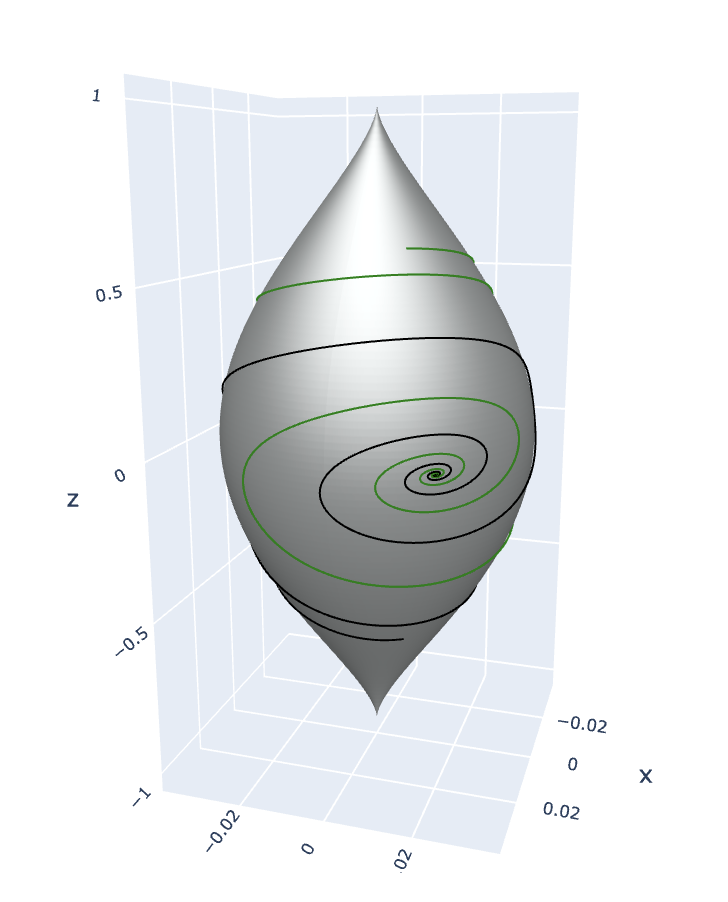}}
	\caption{Dynamics on a Kummer surface of type $(3:3)$ \\(A) Hamiltonian (B) Dissipative}
	\label{julia}
\end{figure}  
\noindent {\bf Acknowledgements} 
	Z.~Ravanpak acknowledges the scholarship “Cercetare postdoctoral\u a avansat\u a” funded by the West University of Timi\c soara, Romania, the financial support from the Spanish Ministry of Science and Innovation under grants PID2022-137909NB-C22, and Erwin Schrödinger International Institute for Mathematics and Physics (ESI), University of Vienna,  where a part of this work has been done.
The authors are grateful to Thomas Strobl for valuable remarks and to Emilia Petri\c sor for the eloquent visualizations created using the Julia software libraries DifferentialEquations.jl and DynamicalSystems.jl.

\noindent {\bf Data availability statement} Data sharing not applicable to this article as no datasets were generated or
analyzed during the current study.

\noindent {\bf Conflict of interest} The author has no relevant financial or non-financial interests to disclose.

\noindent {\bf Ethical Approval} Not applicable.

\noindent {\bf Informed Consent} Not applicable.

\end{document}